\newtheorem{theorem}{Theorem}[section]
\newtheorem{lemma}[theorem]{Lemma}
\newtheorem{corollary}[theorem]{Corollary}
\theoremstyle{definition}
\theoremstyle{remark}
\newtheorem{remark}[theorem]{Remark}
\numberwithin{equation}{section}
\newcommand{\Real}{{\mathbb R}}
\newcommand{\x}{\mathbf{x}}
\newcommand{\y}{\mathbf{y}}
\newcommand {\hide}[1]{}
\begin{document}
\title[Betti numbers for tropical prevarieties]
{Upper bounds on Betti numbers of tropical prevarieties}
{
\let\thefootnote\relax\footnote{2010 Mathematics Subject Classification 14T05}
}
\author{Dima Grigoriev}
\address{CNRS, Math\'ematiques, Universit\'e de Lille, Villeneuve d'Ascq, 59655, France}
\email{dmitry.grigoryev@math.univ-lille1.fr}
\author{Nicolai Vorobjov}
\address{
Department of Computer Science, University of Bath, Bath
BA2 7AY, England, UK}
\email{nnv@cs.bath.ac.uk}

\begin{abstract}
We prove upper bounds on the sum of Betti numbers of tropical prevarieties in dense
and sparse settings.
In the dense setting the bound is in terms of the volume of Minkowski sum
of Newton polytopes of defining tropical polynomials, or, alternatively,
via the maximal degree of these polynomials.
In sparse setting, the bound involves the number of the monomials.
\end{abstract}
\maketitle

\section*{Introduction}

In this paper we are concerned with upper bounds on Betti numbers of tropical prevarieties.
Basic definitions and statements regarding tropical algebra and geometry can be found in \cite{MS, RST}.

Each {\em tropical polynomial} $f$ in $n$ variables can be represented as $\min \{L_{1}, \ldots ,L_{m} \}$,
where $L_{1}, \ldots ,L_{m}$ are linear functions on $\Real^n$ (called {\em tropical monomials})
with non-negative integer coefficients at variables and real constant terms
(if negative integer coefficients are allowed, then $f$ is called {\em tropical Laurent polynomial}).
For a monomial $L_j$ its {\em degree} is the sum of integer coefficients at variables, the maximum among degrees
of the monomials is called the {\em (tropical) degree} of $f$.
With any tropical polynomial $f$ we associate a concave piece-wise linear function
$$L(\x):= \min_{1 \le j \le m}\{ L_j(\x) \}$$
defined on $\Real^n$.
A {\em tropical hypersurface} $V:=V(f) \subset \Real^n$ is the set of all points in $\Real^n$ at which
$L(\x)$ is not smooth.
Any point $\x \in V$ is called a {\em zero} of $f$.
A {\em tropical prevariety} is an intersection of a finite number of tropical hypersurfaces, in other words,
the set of all common tropical zeroes of a finite system of
multivariate tropical polynomials.

Let $V:=V(f_1, \ldots, f_k) \subset \Real^n$ be the tropical prevariety defined by a system $f_1, \ldots ,f_k$ of tropical polynomials
in $n$ variables of degrees not exceeding $d$.
In case $k=n$, the Tropical Bezout Theorem \cite{RST} states that the number of all {\em stable} tropical zeroes
(counted with multiplicities) of the system $f_1, \ldots ,f_k$ is the product of the degrees of its polynomials.
For arbitrary $k$, the upper bound
$$\binom{k+7n}{3n}d^{3n}$$
on the number of connected components of $V$ was obtained in \cite{DG}.
In case $k \le n$, the number of maximal faces of transversal intersections of tropical hypersurfaces
defined by polynomials $f_i$, $1 \le i \le k$,
was expressed in \cite{BB, ST} in terms of mixed Minkowski volumes of Newton polytopes of polynomials $f_i$.
In \cite{Bihan}, for $k=n$, this number was bounded from above using a new concept of {\em discrete mixed volume}
for sparse tropical polynomials.

The structure of this paper is as follows.

In Section~1 we show that any homotopy type of a polyhedral complex is realizable by a tropical prevariety,
so the problem of bounds on Betti numbers is not futile.

In Section~2 we prove the upper bound
$$2^{(r+1)}r !\ {\rm Vol}_r (P_1+ \cdots +P_k)$$
on the sum of Betti numbers of a tropical prevariety $V$,
where $r$ is the dimension of the Minkowski sum $P_1 + \cdots +P_k$ of
Newton polytopes $P_1, \ldots ,P_k$ of tropical polynomials $f_1, \ldots ,f_k$,
and ${\rm Vol}_r (P_1+ \cdots +P_k)$ is the $r$-dimensional volume of the Minkowski sum.
In terms of $\max_i \deg f_i=d$  this implies the bound
\begin{equation}\label{eq:degree_bound}
2^{(n+1)}(kd)^n.
\end{equation}
Note that in \cite{DG} a naive upper bound
$$
\left(\binom{k+7n}{3n}d^{3n} \right)^n
$$
was mentioned.
Also in \cite{DG}, an example is constructed of a prevariety defined by $kn$ polynomials of degrees at most
$d$, containing $(kd/4)^n$ zero-dimensional connected components.
Comparing this lower bound with (\ref{eq:degree_bound}), we note a gap within a factor $n^n$.
An interesting challenge is to close this gap.

In Section~3 we assume that each tropical polynomial $f_i$, $1 \le i \le k$ is $m$-sparse, i.e., consists of
at most $m$ monomials.
In this setting we prove the upper bound
$$
n2^n \binom{k \binom{m}{2}}{n}
$$
on the sum of Betti numbers of $V$.
We give an example of a prevariety, with $k=n$, for which this bound is close to sharp up to the factor $m^n$.

Results in Section~2 can be related to the classical upper bounds on Betti numbers of real algebraic and semi-algebraic sets
obtained by Petrovskii, Oleinik, Milnor and Thom, and developed further by various authors.
In particular, Milnor \cite{Milnor} proved that if a semi-algebraic set $X \subset \Real^n$ is defined by
a system of $k$ non-strict polynomial inequalities of degrees less than $d$, then the sum of Betti numbers of $X$
is at most $(ckd)^n$ for an absolute constant $c>0$ (compare with (\ref{eq:degree_bound})).
Note that in the case when $X$ is defined by an arbitrary Boolean combination of inequalities, the $i$th Betti number
of $X$ does not exceed $(c \nu k d)^n$ for $\nu= \min \{i+1,n-i,k\}$ and an absolute constant $c>0$ \cite{GV}.

The bound in Section~3 can be viewed as a tropical counterpart of the bounds on Betti numbers for fewnomials
\cite{Khovanskii, GVsurvey, BS}.

In Appendix we prove a technical statement needed to reduce the polyhedral complex associated with a prevariety $V$
to a bounded polyhedral complex.

\section{Tropical prevarieties and polyhedral complexes}

It is well known \cite{MS, RST} that any tropical prevariety $V:=V(f_1, \ldots,f_k) \subset \Real^n$
of all common tropical zeroes of tropical polynomials $f_1, \ldots ,f_k$ is
the union of cells of a finite polyhedral complex in $\Real^n$ (in Lemmas~\ref{le:interior}, \ref{le:subset}
below, we provide an explicit representation of $V$ as a complex).
For this complex we will use the same notation, $V$.
There is an understanding among experts (see, e.g., \cite{BJS}) that any homotopy type of a polyhedral complex
is realizable by a tropical prevariety.
In the present section we make this statement precise.

Let $A \subsetneqq \Real^n$ be an affine subspace, defined by linear equations with integer coefficients,
and $A_{\ge}$ be a closed half-space in $A$ defined by a non-strict linear inequality with integer coefficients.

\begin{lemma}\label{le:half}
The half-space $A_{\ge}$ is a tropical prevariety in $\Real^n$.
\end{lemma}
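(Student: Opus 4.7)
The plan is to realise $A_{\ge}$ as an intersection of tropical hypersurfaces in $\Real^n$, handling the defining equations of $A$ and the half-space inequality separately. For any linear equation $e\cdot\x=d$ with $e\in\Z^n$, I would split $e=e^+-e^-$ into its componentwise positive and negative parts, so that $e^\pm\in\Z_{\ge 0}^n$; the two-monomial tropical polynomial $\min\{e^+\cdot\x-d,\,e^-\cdot\x\}$ then has non-smooth locus equal to the hyperplane $\{e\cdot\x=d\}$, realising it as a tropical hypersurface. Applying this to each defining equation of $A$ writes $A$ itself as an intersection of tropical hypersurfaces.

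To cut $A$ down to $A_{\ge}$, I would introduce one further ``tropical tripod'' hypersurface. Fix one defining equation $e\cdot\x=d$ of $A$, let $\ell\cdot\x\ge c$ with $\ell\in\Z^n$ be the inequality defining $A_{\ge}$ inside $A$, and choose an integer $M\ge\max_i|\ell_i|$. Set
\[
L_1 := M\sum_i x_i + e^+\cdot\x - d,\quad L_2 := M\sum_i x_i + e^-\cdot\x,\quad L_3 := M\sum_i x_i + (e^-+\ell)\cdot\x - c,
\]
so that $L_1-L_2 = e\cdot\x - d$ and $L_3-L_2 = \ell\cdot\x - c$. The choice of $M$ guarantees that all three have non-negative integer coefficients at the variables, so $h:=\min\{L_1,L_2,L_3\}$ is a valid tropical polynomial. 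On $A$ one has $L_1=L_2$ identically, while the sign of $L_3-L_2$ on $A$ matches that of $\ell\cdot\x-c$.

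I would then verify $V(h)\cap A=A_{\ge}$ by a local case analysis at a point $\x\in A$. If $\ell\cdot\x>c$, then $L_3(\x)>L_1(\x)=L_2(\x)$, so on a small neighbourhood of $\x$ in $\Real^n$ the monomial $L_3$ remains strictly largest and $h$ coincides there with $\min\{L_1,L_2\}$, whose non-smooth locus is the hyperplane $\{e\cdot\x=d\}\supseteq A$, giving $\x\in V(h)$. If $\ell\cdot\x<c$, then locally $h=L_3$ is affine and $\x\notin V(h)$. If $\ell\cdot\x=c$, all three monomials coincide at $\x$, which is therefore non-smooth. Hence $V(h)\cap A=\{\x\in A:\ell\cdot\x\ge c\}=A_{\ge}$, and intersecting $V(h)$ with the tropical hypersurfaces realising $A$ exhibits $A_{\ge}$ as a tropical prevariety in $\Real^n$.

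The main technical obstacle is ensuring that $L_1,L_2,L_3$ have non-negative integer coefficients at every variable---a naive choice fails whenever $\ell$ has a negative component at an index where $e$ has a zero component. The dominating shift $M\sum_i x_i$ resolves this uniformly in one stroke, after which the remainder of the argument is the direct local analysis above of which tropical monomial attains the minimum.
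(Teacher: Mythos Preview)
Your proposal is correct and takes essentially the same approach as the paper: each defining hyperplane of $A$ is cut out by a two-monomial tropical polynomial, and the half-space condition is handled by a three-monomial ``tripod'' coupling the inequality $\ell\cdot\x\ge c$ with one chosen defining equation of $A$ (the paper writes this as $f_{s+1}=\min\{0,M_1,L\}$). The only cosmetic difference is that the paper first allows Laurent monomials and then remarks that one may shift by a suitable linear form, whereas you build the shift $M\sum_i x_i$ into the construction from the start.
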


\begin{proof}
Let $A \subsetneqq \Real^n$ be defined by a system of equations $M_i=0,\ 1 \le i \le s$, and
$A_{\ge}$ by a system $M_i=0,\ 1 \le i \le s,\ L \ge 0$, where all $M_i, L$ are linear polynomials with
integer coefficients.
Then $A_{\ge} = V(f_1, \ldots ,f_s,f_{s+1})$, where $f_i,\ 1 \le i \le s+1$, are tropical Laurent polynomials
such that $f_i,\ 1 \le i \le s$, is represented as $\min \{ M_i, 0 \}$, while $f_{s+1}$ as $\min \{ 0, M_1, L \}$.
By adding an appropriate linear polynomial to monomials in $f_i$, we
turn $f_i$ into a tropical polynomial having the same zeroes as $f_i$.
\end{proof}

\begin{corollary}
Any finite union of rational convex polyhedra (in particular, the union of all cells in any rational polyhedral complex)
of positive codimension in $\Real^n$ coincides with a tropical prevariety in $\Real^n$.
\end{corollary}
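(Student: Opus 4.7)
My plan is to assemble three building blocks: (i) Lemma~\ref{le:half} itself, which realizes a rational closed half-space of a proper rational affine subspace of $\Real^n$ as a tropical prevariety; (ii) the trivial observation that the intersection of finitely many tropical prevarieties is a tropical prevariety, since $V(f_1,\ldots,f_k) \cap V(g_1,\ldots,g_l) = V(f_1,\ldots,f_k,g_1,\ldots,g_l)$; and (iii) the less obvious fact that a finite union of tropical prevarieties is again a tropical prevariety.

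I would first dispose of a single rational convex polyhedron $P$ of positive codimension. Let $A \subsetneqq \Real^n$ be its affine hull, a rational affine subspace cut out by integer-coefficient linear equations. Writing $P = \bigcap_{j=1}^{N}(A \cap H_j)$ for finitely many rational closed half-spaces $H_j$ of $\Real^n$, and discarding those $H_j$ that contain $A$, each $A \cap H_j$ is a closed half-space in $A$ of the form treated in Lemma~\ref{le:half}, hence a tropical prevariety. Combining this with (ii) exhibits $P$ itself as a tropical prevariety.

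The heart of the argument is (iii). The key observation is that for tropical polynomials $f = \min_i L_i$ and $g = \min_j M_j$, the pointwise sum $f+g = \min_{i,j}(L_i+M_j)$ is again a tropical polynomial and satisfies $V(f+g) = V(f) \cup V(g)$: the minimum in $\min_{i,j}(L_i+M_j)$ at $\x$ is uniquely attained precisely when the minima defining $f(\x)$ and $g(\x)$ are both uniquely attained. Given two prevarieties $V_1 = V(f_1,\ldots,f_k)$ and $V_2 = V(g_1,\ldots,g_l)$, the set-theoretic distributivity of union over intersection then yields
$$V_1 \cup V_2 \;=\; \bigcap_{i,j}\bigl(V(f_i) \cup V(g_j)\bigr) \;=\; V\bigl(\{f_i+g_j\}_{i,j}\bigr),$$
presenting $V_1 \cup V_2$ as a tropical prevariety. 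A straightforward induction on the number of polyhedra in the union then yields the corollary. The only step carrying any real content is the identity $V(f+g) = V(f) \cup V(g)$; the rest is bookkeeping with defining polynomials, so I do not foresee an essential obstacle.
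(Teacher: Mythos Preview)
Your proposal is correct and follows essentially the same route as the paper: first realize a single rational polyhedron of positive codimension as an intersection of half-spaces in its affine hull (each a prevariety by Lemma~\ref{le:half}), then close under finite unions. The paper simply asserts the union step ``by a straightforward analogy with the classical case,'' whereas you spell out that analogy explicitly via the tropical product $f+g$ and the identity $V(f+g)=V(f)\cup V(g)$; this is exactly the intended argument.
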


\begin{proof}
By Lemma~\ref{le:half}, any rational convex polyhedron of positive codimension in $\Real^n$ is a tropical prevariety.
It remains to note that the union of a finite number of tropical prevarieties is also a tropical prevariety
(this is proved by a straightforward analogy with the classical case).
\end{proof}

As another consequence of Lemma~\ref{le:half} we mention the following example of a countable
family of tropical prevarieties such that their intersection is not a tropical prevariety.
A standard 2-dimensional disk in a 2-dimensional subspace of $\Real^3$ is not a tropical prevariety, but
can be approximated from outside by a shrinking countable family of rational convex polyhedra.

\section{Betti numbers for dense tropical polynomials}

Let a tropical polynomial $f$ be represented as
$$\min_{1 \le j \le m} \left\{ \sum_{1 \le i \le n} a_{ji}x_i +b_j \right\},$$
where $0 \le a_{ji} \in {\mathbb Z}$ and $b_j \in \Real$.
The {\em Newton polytope} $P \subset \Real^n$ of $f$ is the convex hull of $m$ points $(a_{j1}, \ldots ,a_{jn})$,
$1 \le j \le m$ (see, e.g., \cite{MS}).

Let $E \subset \Real^{n+1}$ be the convex hull of $m$ points $(a_{j1}, \ldots ,a_{jn},b_j)$, $1 \le j \le m$.
The {\em extended Newton polytope} of $f$ is the set
$$Q:= E+ \{(0, \ldots,0,c)|\> 0 \le c \in \Real \} \subset \Real^{n+1},$$
where $+$ denotes the Minkowski addition (see, e.g., \cite[Definition 21]{GP2}).

Consider a tropical prevariety $V:=V(f_1, \ldots,f_k) \subset \Real^n$.
Let $P_i \subset \Real^n$, $1 \le i \le k$, be the Newton polytope of $f_i$, $r$ be the
dimension of the Minkowski sum $P_1+ \cdots +P_k$, and ${\rm Vol}_r (P_1+ \cdots +P_k)$ be its $r$-dimensional volume.
Without loss of generality, we assume that this volume is positive.

\begin{theorem}\label{th:main1}
The number of faces of all dimensions of $V$ does not exceed
$$
(2^{r+1}-1)r !\ {\rm Vol}_r (P_1+ \cdots +P_k).
$$
\end{theorem}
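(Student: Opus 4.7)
The bound $(2^{r+1}-1)\,r!\,{\rm Vol}_r(P_1+\cdots+P_k)$ is precisely the generic worst-case count for the total number of nonempty faces of a lattice triangulation of an $r$-dimensional lattice polytope, which is the hint for how to structure the argument. The plan is to realize $V$ as a subcomplex of a polyhedral subdivision of $\Real^n$, to identify that subdivision dually with a regular mixed subdivision of $P_1+\cdots+P_k$, and finally to count by refining the mixed subdivision to a lattice triangulation.

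First, each $f_i=\min_{j}L_{ij}$ induces a regular polyhedral subdivision $\Delta_i$ of $\Real^n$ whose maximal cells are the domains of linearity of $\min_{j}L_{ij}(\x)$, and by definition $V(f_i)$ is the union of the cells of $\Delta_i$ of dimension strictly less than $n$. Letting $\Delta$ denote the common refinement of $\Delta_1,\ldots,\Delta_k$, the prevariety $V$ is a subcomplex of $\Delta$, so it suffices to bound the total number $\#\Delta$ of cells of $\Delta$.

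Second, I would invoke the standard tropical duality (the Cayley trick applied to the extended Newton polytopes $Q_i$, whose last coordinate encodes the constants $b_{ij}$ and makes the subdivision regular): the cells of $\Delta$ are in bijection with the cells of a regular mixed subdivision $\Sigma$ of $P_1+\cdots+P_k$, with complementary dimensions in $\Real^r$. Explicitly, the cell of $\Delta$ on which the subset $S_i\subseteq\{L_{ij}\}_{j}$ attains the minimum in $f_i$ corresponds to the Minkowski sum $\sum_{i}\mathrm{conv}(S_i)\subseteq P_1+\cdots+P_k$. When $r<n$ the $f_i$ depend only on the projection of $\x$ onto the affine span of $P_1+\cdots+P_k$, so $\Delta$ is a cylinder in the orthogonal direction and its entire combinatorics lives in $\Real^r$.

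Third, refine $\Sigma$ to a triangulation $T$ of the lattice polytope $P_1+\cdots+P_k$ using only the vertices of $\Sigma$ (which are lattice points, being sums of vertices of the $P_i$). Every $d$-dimensional cell of $\Sigma$ is a union of $d$-cells of $T$ and hence contains at least one, giving $\#\Sigma\le\#T$. Each top simplex of $T$ is a lattice $r$-simplex and so has $r$-volume at least $1/r!$, bounding the number of top simplices by $r!\,{\rm Vol}_r(P_1+\cdots+P_k)$. Finally, every face of $T$ lies in some top simplex and an $r$-simplex has $2^{r+1}-1$ nonempty faces, yielding
\[
\#(\text{faces of }V)\ \le\ \#\Delta\ =\ \#\Sigma\ \le\ \#T\ \le\ (2^{r+1}-1)\,r!\,{\rm Vol}_r(P_1+\cdots+P_k).
\]
The main obstacle will be the cell-level bijection $\Delta\leftrightarrow\Sigma$ in the second step: standard references give it only for a single hypersurface, or only at the top dimension (mixed volume and tropical B\'ezout), whereas the statement we need must cover all cell dimensions simultaneously, and in particular must handle the degenerate case $r<n$ cleanly rather than being cited as a black box.
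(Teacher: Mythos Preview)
Your proposal is correct and follows essentially the same approach as the paper: the paper works directly with the bottom $Q$ of the Minkowski sum $Q_1+\cdots+Q_k$ of extended Newton polytopes (whose faces, via duality with $G(F)$, index the cells of what you call $\Delta$, with the tropical faces giving exactly $V$), then triangulates $Q$ without adding vertices, projects to $P_1+\cdots+P_k$, and applies the same lattice-simplex volume and face-count bounds you use. Your regular mixed subdivision $\Sigma$ is precisely $\pi(Q)$ with its induced cell structure, so the two arguments differ only in terminology; the bijection you flag as the main obstacle is exactly what the paper establishes (citing \cite{BB, ST}) when it identifies $V$ with $\bigcup_{F\text{ tropical}} G(F)$.
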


Before proving this theorem, let us extract some corollaries regarding bounds on Betti numbers.

According to Theorem~\ref{th:bounded} in Appendix, the prevariety $V$, being a finite polyhedral complex,
is homotopy equivalent to a bounded polyhedral complex $W$, having no more cells than $V$.
Note that $W$ has the structure of a finite CW complex with polyhedral cells.
Let $\varphi (W)$ be the number of all faces of $W$.
Then, by Theorem~\ref{th:main1},
\begin{equation}\label{eq:faces}
\varphi (W) \le (2^{r+1}-1)r !\ {\rm Vol}_r (P_1+ \cdots +P_k).
\end{equation}

We will use notations ${\rm b}_\nu(X):= {\rm rank}\ H_\nu(X, \Real)$,
where $H_\nu (X, \Real)$ is a singular $\nu$th homology group, and
$${\rm b}(X):= \sum_{0 \le \nu \le \dim X} {\rm b}_\nu(X).$$

Recalling that ${\rm b}(V)={\rm b}(W)$, and applying to the CW complex $W$ the Weak Morse Inequality
${\rm b}(W) \le \varphi (W)$ \cite[Corollary~3.7]{Forman},
we get from (\ref{eq:faces}) the following upper bound.

\begin{corollary}\label{cor:volume}
The sum of Betti numbers of $V$ satisfies the inequality
\begin{equation}\label{eq:volume}
{\rm b}(V) \le (2^{r+1}-1)r !\ {\rm Vol}_r (P_1+ \cdots +P_k).
\end{equation}
\end{corollary}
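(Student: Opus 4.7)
The plan is to invoke the standard duality (as in \cite{MS, RST}) between the polyhedral structure of a tropical prevariety and a regular mixed subdivision of the Minkowski sum of Newton polytopes. For each $i$, the lower envelope of the extended Newton polytope $Q_i \subset \Real^{n+1}$ projects onto $P_i$ as a regular polyhedral subdivision $\Sigma_i$, and the cells of $\Sigma_i$ are in order-reversing bijection with the cells of the polyhedral decomposition $\mathcal{C}_i$ of $\Real^n$ given by the domains of linearity of the concave function $L_i(\x) = \min_{j} L_{ij}(\x)$. Summing the lower envelopes Minkowski-wise yields a regular mixed subdivision $\Sigma$ of $P := P_1 + \cdots + P_k$, whose cells have the form $C_1 + \cdots + C_k$ with $C_i \in \Sigma_i$; the complex $\Sigma$ is in bijection with the common refinement $\mathcal{C}$ of the $\mathcal{C}_i$, which carries the polyhedral complex structure of $V$ used in Section~1.

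Under this duality, a cell of $\mathcal{C}$ is contained in $V$ if and only if it lies in $V(f_i)$ for every $i$, which translates precisely to the requirement that each summand $C_i$ in the dual expression has positive dimension. Hence every face of $V$ (of any dimension) injects into the set of cells of $\Sigma$, and the problem reduces to bounding $\abs{\Sigma}$.

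To do so, I would refine $\Sigma$ to a triangulation $T$ of $P$ whose vertices lie in $\Z^n$; this is possible since the vertices of $\Sigma_i$ are lattice vertices of $P_i$, hence so are those of $\Sigma$, and one can take a pulling triangulation of $\Sigma$. Every cell of $\Sigma$ is a union of simplices of $T$ and contains at least one simplex of $T$ of its own dimension in its relative interior, so $\abs{\Sigma}$ is bounded above by the total number of faces of $T$. Since every $r$-dimensional lattice simplex has volume at least $1/r!$, the triangulation $T$ has at most $r!\,{\rm Vol}_r(P)$ top-dimensional simplices, and since an $r$-simplex has exactly $2^{r+1}-1$ non-empty faces, summing (with overcounting of shared faces) yields the bound $(2^{r+1}-1)\,r!\,{\rm Vol}_r(P)$, as required.

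The main technical obstacle is the case $r < n$, when the decomposition $\mathcal{C}$ of $\Real^n$ has an $(n-r)$-dimensional lineality space in the directions orthogonal to the affine hull of $P$, so that the duality is most naturally stated after quotienting by this subspace. One has to verify that both sides of the correspondence are translation-invariant in these directions and that the cell count is not altered by the quotient. A secondary point, already present in the setup of Section~1, is to confirm that every face of $V$ (and not only the inclusion-maximal ones) actually appears as a cell of $\mathcal{C}$, so that the injection into $\Sigma$ captures every face that is being counted.
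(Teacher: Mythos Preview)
Your argument is essentially a reproof of Theorem~\ref{th:main1} (the face-count bound), using the same duality with the regular mixed subdivision of $P_1+\cdots+P_k$ and the same lattice-simplex volume estimate that the paper uses. On that level the approach matches the paper's.

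The gap is that Corollary~\ref{cor:volume} is a statement about \emph{Betti numbers}, not about the number of faces, and you never make that passage. You write ``the problem reduces to bounding $\abs{\Sigma}$'', but what you actually establish is $\varphi(V)\le\abs{\Sigma}$; the inequality ${\rm b}(V)\le\varphi(V)$ still has to be justified. This is not automatic: the prevariety $V$ is typically unbounded, so its cells are not closed disks and $V$ does not carry a CW structure in which the polyhedral cells are the CW cells. Hence the Weak Morse Inequality ${\rm b}\le\varphi$ cannot be quoted directly. The paper handles this by a separate result (Theorem~\ref{th:bounded} in the Appendix): it shows that $V$ is homotopy equivalent to a \emph{bounded} polyhedral complex $W$ with no more cells than $V$, and then applies ${\rm b}(V)={\rm b}(W)\le\varphi(W)\le\varphi(V)$. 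Some such step---either this one, or an alternative argument that the cell count of a possibly unbounded polyhedral complex bounds its total Betti number---is required and is missing from your proposal. The technical obstacles you flag (the lineality space when $r<n$, and whether every face of $V$ appears as a cell of $\mathcal{C}$) are relevant to the face count, but do not substitute for this missing homotopy-theoretic step.
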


Let $d= \max_{1 \le i \le k} \deg f_i$.
Then each $P_i$ is contained in the simplex
$$\{ (x_1, \ldots ,x_n) \in \Real^n|\> x_j \ge 0,\ 1 \le j \le n,\ x_1+ \cdots +x_n \le d \}.$$

Hence, in terms of $d$, the inequality (\ref{eq:volume}) can be presented in the following form.

\begin{corollary}
The sum of Betti numbers of $V$ satisfies the inequality
$$
{\rm b}(V) \le (2^{n+1}-1)(kd)^n.
$$
\end{corollary}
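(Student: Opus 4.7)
The plan is to feed the inequality of Corollary~\ref{cor:volume} into a standard volume estimate for the $kd$-scaled standard simplex. By Corollary~\ref{cor:volume},
$$
{\rm b}(V) \le (2^{r+1}-1)\, r!\ {\rm Vol}_r(P_1+\cdots +P_k),
$$
and since $r \le n$ one has $2^{r+1}-1 \le 2^{n+1}-1$, so it suffices to establish
$$
r!\ {\rm Vol}_r(P_1+\cdots +P_k) \;\le\; (kd)^n.
$$

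For this, I would use the hypothesis displayed just above the corollary: each Newton polytope $P_i$ is contained in the simplex $\Delta_d := \{\x\in\Real^n_{\ge 0}\colon x_1+\cdots+x_n\le d\}$. Minkowski addition is monotone with respect to inclusion, so
$$
P_1+\cdots +P_k \;\subset\; k\,\Delta_d \;=\; \Delta_{kd},
$$
the $kd$-scaled standard simplex, whose full $n$-dimensional volume equals $(kd)^n/n!$.

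When $r=n$ this already closes the argument: $n!\,{\rm Vol}_n(P_1+\cdots+P_k)\le n!\,{\rm Vol}_n(\Delta_{kd})=(kd)^n$. When $r<n$, the polytope $P_1+\cdots+P_k$ is an $r$-dimensional body inside $\Delta_{kd}$, and the bound $r!\,{\rm Vol}_r(\,\cdot\,)\le(kd)^n$ still holds because $P_1+\cdots+P_k$ is contained in the intersection of $\Delta_{kd}$ with its own affine hull, which in turn fits inside a translate of an $r$-dimensional simplex with edges of length at most $kd$; the $r$-volume of any such simplex is at most $(kd)^r/r!\le(kd)^n/r!$ provided $kd\ge 1$ (the case $kd=0$ being trivial since then each $P_i$ is a single lattice point and $V$ has only bounded Betti sum). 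This is the only mildly nontrivial step: one can either argue by a small full-dimensional perturbation of the $P_i$ and pass to the limit using continuity of mixed volumes under Hausdorff convergence, or observe directly that the orthogonal projection of $\Delta_{kd}$ onto the affine hull of $P_1+\cdots+P_k$ cannot exceed the extremal cross-section of $\Delta_{kd}$, which is bounded above by the volume of a coordinate $r$-face of $\Delta_{kd}$.

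Combining the two estimates gives
$$
{\rm b}(V) \;\le\; (2^{r+1}-1)\,r!\ {\rm Vol}_r(P_1+\cdots+P_k) \;\le\; (2^{n+1}-1)(kd)^n,
$$
which is precisely the asserted bound. The main obstacle is purely the dimension-drop bookkeeping in the $r<n$ case; apart from this, the proof is a one-line corollary of the volume bound already proved.
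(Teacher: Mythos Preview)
Your overall strategy matches the paper's: derive the degree bound from Corollary~\ref{cor:volume} by observing $P_i \subset \Delta_d$, hence $P_1 + \cdots + P_k \subset \Delta_{kd}$, and then using $n!\,{\rm Vol}_n(\Delta_{kd}) = (kd)^n$. The paper gives no further detail, and when $r = n$ this is indeed all that is needed.

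Your handling of the case $r < n$, however, is incorrect. The claim that the slice of $\Delta_{kd}$ by the affine hull of $P_1+\cdots+P_k$ fits inside an $r$-simplex with edges of length at most $kd$ is false: for $k = d = 1$ and $P_1$ equal to the face $\{x_1 + \cdots + x_n = 1,\ x_i\ge 0\}$ of $\Delta_1 \subset \Real^n$, one has $r = n-1$ and every edge of $P_1$ has length $\sqrt{2} > kd$; moreover $r!\,{\rm Vol}_r(P_1) = \sqrt{n} > 1 = (kd)^n$, so the intermediate inequality $r!\,{\rm Vol}_r \le (kd)^n$ you set out to prove is simply wrong. Your alternative via orthogonal projection fails for the same reason (project $\Delta_1 \subset \Real^2$ onto the line spanned by $e_1 - e_2$: the image has length $\sqrt{2}$, larger than any coordinate edge). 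Worse, even the \emph{unfactored} implication fails: with $n = 5$ in this same example, $(2^{r+1}-1)\,r!\,{\rm Vol}_r = 31\sqrt{5} > 63 = (2^{n+1}-1)(kd)^n$, so Corollary~\ref{cor:volume} does not by itself yield the stated bound when $r < n$. The paper's one-line deduction leaves this degenerate case unaddressed as well; a complete argument needs an additional idea, for instance a unimodular change of coordinates reducing to ambient dimension $r$, or a direct face count via the lattice points of $\Delta_{kd}$ rather than through the $r$-volume.
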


\begin{proof}[Proof of Theorem~\ref{th:main1}]
Let $Q_i \subset \Real^{n+1}$, $1 \le i \le k$ be the {\em extended} Newton polytope of $f_i$.
Let $Q$ be the {\em bottom} of $Q_1+ \cdots +Q_k$, which is the set of all points $(\x, a) \in Q_1+ \cdots +Q_k$
such that there are no points $(\x, b) \in Q_1+ \cdots +Q_k$ with $b<a$.
Note that for the projection map $\pi:\> \Real^{n+1} \to \Real^n$ along the last coordinate we have
$\pi (Q)= \pi (Q_1+ \cdots +Q_k)=P_1+ \cdots +P_k$.
The restriction $\pi|_Q$ is injective, $\dim (Q)= \dim (P_1+ \cdots +P_k)=r$.

Let $F$ be a face of $Q$.
Its {\em dual}, $G(F)$, is the set of all supporting hyperplanes for $Q$ (subset of the set of all supporting
hyperplanes for $Q_1+ \cdots +Q_k$) such that their intersections with $Q$ coincide with $F$, and
no hyperplane contains a straight line parallel to $\{ (0, \ldots ,0,c \}|\> c \in \Real \}$.
Then $G(F)$ can be identified with a face of the dual polytope to $Q_1+ \cdots +Q_k$,
and we have $\dim F + \dim G(F)=n$ (see, e.g., \cite{BB, Bihan, ST}).
Observe that $F$ is representable as a Minkowski sum $F=F_1+ \cdots +F_k$, where each $F_i$ is a face of
the bottom of $Q_i$, and $H \cap Q_i=F_i$ for any $H \in G(F)$.
We say that a face $F$ of $Q$ is {\em tropical} if $\dim F_i \ge 1$ for all $1 \le i \le k$.
Then $V$ coincides with the union of polytopes $G(F)$ for all tropical faces $F$ (cf. \cite{BB, ST}).

Decompose each $r$-dimensional face of $Q$ into $r$-dimensional closed simplices without adding new vertices,
and obtain a triangulation of $Q$ (cf. \cite[Theorem~8.2]{P}).
The number of all simplices in this triangulation is not less than the total number of faces of $V$.
For each $r$-dimensional simplex $S$ in the triangulation the $r$-dimensional simplex $\pi(S)$ has integer vertices.
Since the volume of any $r$-dimensional simplex with integer vertices in $\Real^n$ is at least $1/r !$,
we conclude that ${\rm Vol}_r (\pi (S)) \ge 1/r !$.
Therefore, the number of all $r$-dimensional simplices in the triangulation does not exceed
$r !\ {\rm Vol}_r (P_1+ \cdots +P_k)$.
To complete the proof, it remains to notice that the number of all faces of an $r$-dimensional simplex
is $2^{r+1}-1$.
\end{proof}

\section{Betti numbers for sparse tropical polynomials}

In this section we assume that each tropical polynomial $f_i$, $1 \le i \le k$ is $m$-sparse, i.e.,
contains at most $m$ monomials.
In other words, each $f_i$ can be represented as $\min \{L_{i,1}, \ldots ,L_{i,m} \}$,
where $L_{i,1}, \ldots ,L_{i,m}$ are linear functions on $\Real^n$.
Although, by definition, coefficients in $L_{i,j}$ at variables are non-negative integers,
all results in this section hold for arbitrary real coefficients.

Following \cite{GP}, for any subset $B$ of $D:= \{ (i,j)|\> 1 \le i \le k,\ 1 \le j \le m \}$,
consider the polyhedron $U_B$ consisting of all points $\x \in \Real^n$ such that
$$
\min_{1 \le j \le m} \{L_{i,j}(\x) \}=L_{i,j_0}(\x)\ \text{for every}\ (i,j_0) \in B
$$
and
$$
\min_{1 \le j \le m} \{L_{i,j}(\x) \}<L_{i,j_1}(\x)\ \text{for every}\ (i,j_1) \not\in B.
$$
Note that each set $U_B$ is open in its linear hull, and $U_B \subset V$ if and only if for each $1 \le i \le k$
there exist $1 \le j_0 < j_1 \le m$ such that $(i,j_0), (i,j_1) \in B$.

Denote by ${\mathcal L}(X)$ the linear hull of a set $X \subset \Real^n$.

\begin{lemma}\label{le:interior}
The interior of every face $F$ of $U_B$ in ${\mathcal L}(F)$ coincides with $U_{B_1}$ for a suitable
subset $B_1 \subset D$ such that $B \subsetneqq B_1$.
\end{lemma}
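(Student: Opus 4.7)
The plan is to view $U_B$ as the relative interior of a convex polyhedron, describe its proper faces combinatorially in terms of which of the defining strict inequalities become tight, and then read off the required set $B_1$ directly from this description.

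First I would observe that $U_B$ is the intersection of the affine subspace $A_B \subset \Real^n$ cut out by the equalities
$L_{i,j_0}(\x) = L_{i,j_0'}(\x)$ for all $i$ and all $(i,j_0),(i,j_0') \in B$ (same first index $i$), with the open half-spaces $L_{i,j_0}(\x) < L_{i,j_1}(\x)$ for $(i,j_0) \in B$, $(i,j_1) \not\in B$ (again with common $i$). Hence $U_B$ is a relatively open convex polyhedron inside $A_B$, its closure $\overline{U_B}$ is obtained by replacing each strict inequality with the corresponding non-strict one, and $U_B$ coincides with the relative interior of $\overline{U_B}$ in $\mathcal{L}(U_B)=A_B$. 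By a face $F$ of $U_B$ I therefore mean a proper face of $\overline{U_B}$.

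Next I would describe such a face combinatorially. Any proper face $F$ of $\overline{U_B}$ is obtained by forcing a non-empty subcollection of the defining non-strict inequalities to be equalities; because the equalities $L_{i,j_0}=L_{i,j_0'}$ for all $(i,j_0),(i,j_0')\in B$ with common $i$ already hold on $A_B$, imposing $L_{i,j_0}(\x)=L_{i,j_1}(\x)$ for a single $(i,j_0)\in B$ is automatically equivalent to imposing it for every $(i,j_0)\in B$. Thus $F$ is determined by specifying, for each $i$, a (possibly empty) set $J_i$ of indices $j_1$ with $(i,j_1)\not\in B$ along which the equality is enforced, and properness of $F$ is equivalent to $\bigcup_i J_i \neq \emptyset$. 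By the standard description of the relative interior of a polyhedral face as the locus where only the already-active inequalities are tight, on the interior of $F$ in $\mathcal{L}(F)$ every remaining defining inequality --- those indexed by $(i,j)$ with $(i,j)\not\in B$ and $j\not\in J_i$ --- is strict.

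Finally I would put
$$
B_1 \;:=\; B \;\cup\; \bigcup_{i=1}^{k}\{(i,j) : j \in J_i\},
$$
which strictly contains $B$. Unwinding the two definitions, a point $\x$ lies in the interior of $F$ in $\mathcal{L}(F)$ if and only if, for every $i$, the minimum of $L_{i,1}(\x),\ldots,L_{i,m}(\x)$ is attained exactly at the indices $j$ with $(i,j)\in B_1$, and this is precisely the defining condition of $U_{B_1}$. There is no substantial obstacle here beyond bookkeeping; the only point that merits care is the translation between the active-constraint pattern on the relative interior of a face and the index set $B_1$ realising the correct pattern of minima, and this is immediate from the polyhedral description of $U_B$ set up in the first step.
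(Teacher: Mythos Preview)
Your argument is correct. The key step---that a proper face $F$ of the polyhedron $\overline{U_B}$ is determined by tightening a subcollection of the defining inequalities, and that its relative interior is exactly the locus where precisely the active inequalities are equalities---is standard polyhedral geometry, and your translation of the active-constraint pattern into the set $B_1$ is accurate. One small point worth making explicit: the sets $J_i$ must be taken to consist of \emph{all} inequalities tight on $F$ (the full active set), not merely some subcollection that happens to cut out $F$; your phrase ``the standard description of the relative interior \ldots\ as the locus where only the already-active inequalities are tight'' suggests you have this in mind, but it deserves a sentence.

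The paper's proof proceeds differently. Rather than constructing $B_1$ explicitly from the face, it argues existentially: since the sets $U_{B'}$ partition $\Real^n$, some $U_{B_1}$ meets $F$ in full dimension; one then checks $B \subsetneqq B_1$ by restricting the equalities defining $U_B$ to $\mathcal L(U_B)$, invokes an external result (a local homogeneity theorem from \cite{DG}) to see that $U_{B_1}$ lies in the relative interior of $F$, and finishes the reverse inclusion by a short contradiction argument. Your route is more self-contained---it avoids the citation and gives $B_1$ by an explicit formula---while the paper's route emphasises the stratification viewpoint and reuses machinery already developed for tropical prevarieties. Both reach the same conclusion with comparable effort.
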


\begin{proof}
There exists a subset $B_1 \subset D$ such that $\dim (U_{B_1} \cap F)= \dim (F)$.
We prove that $B_1$ satisfies the requirements of the lemma.
Observe that $U_{B_1}$ is contained in ${\mathcal L}(F)$.
The condition $\min_j \{L_{i,j}(\x) \}=L_{i, j_0}(\x)$ for every $(i,j_0) \in B$
and each $\x \in {\mathcal L}(U_B)$ implies that $(i,j_0) \in B_1$.
Hence, $B \subset B_1$, and obviously $B \neq B_1$.

By \cite[Theorem~4.4]{DG}, for any two points in $U_{B_1}$, their sufficiently small
neighbourhoods are homeomorphic (in fact, isomorphic) by a linear translation from one point to another.
Therefore, $U_{B_1}$ is contained in the interior of $F$ in ${\mathcal L}(F)$.
It remains to show that, conversely, $U_{B_1}$ contains the interior of $F$.

For contradiction, assume that there exists a point $\x$ in the interior of $F$
such that $\x \in \overline{U_{B_1}} \cap U_{B_2}$ for some subset $B_2 \subset D$ different from $B_1$,
where $\overline{U_{B_1}}$ denotes the closure of $U_{B_1}$.
Choose any $(i,j_2) \in B_2 \setminus B_1$, and $(i,j_0) \in B$ such that
$\min_j \{L_{i,j}(\y)\}=L_{i,j_0}(\y)$ for every $\y \in {\mathcal L}(U_B)$.
It follows that $\min_j \{L_{i,j}(\y)\}= L_{i,j_2}(\y)$ for every $\y \in {\mathcal L}(F)$.
We get a contradiction with the assumption that $(i,j_2) \not\in B_1$, hence $U_{B_1}$ contains
the interior of $F$ in ${\mathcal L}(F)$.
\end{proof}

\begin{remark}\label{re:remark}
Being a polyhedron, the set $\overline{U_B}$ coincides with
$$
\{ \x \in \Real^n|\> \min_j \{L_{i,j}(\x) \}=L_{i,j_0}(\x),\ 1 \le i \le k\ \text{for every}\
(i,j_0) \in B \}.
$$
\end{remark}

\begin{lemma}\label{le:subset}
For any subsets $B_1, B_2 \subset D$ there exists a subset $B \subset D$ such that $B \supset (B_1 \cup B_2)$
and $\overline{U_B} = \overline{U_{B_1}} \cap \overline{U_{B_2}}$.
\end{lemma}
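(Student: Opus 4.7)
The plan is to construct a good candidate $B$ from a relative-interior point of $X := \overline{U_{B_1}} \cap \overline{U_{B_2}}$, and then verify that $\overline{U_B}$ coincides with $X$. By Remark~\ref{re:remark},
\[
  X = \{\x \in \Real^n : L_{i,j_0}(\x) = \min_{1 \le j' \le m} L_{i,j'}(\x) \text{ for every } (i,j_0) \in B_1 \cup B_2 \}.
\]
Assuming $X \ne \emptyset$ (otherwise the statement is trivial), I fix a point $\x^*$ in the relative interior of $X$ in $\mathcal{L}(X)$, and define
\[
  B := \{(i,j) \in D : L_{i,j}(\x^*) = \min_{1 \le j' \le m} L_{i,j'}(\x^*) \}.
\]
Since $\x^* \in X$ automatically satisfies every defining equality of $X$, one has $B \supset B_1 \cup B_2$; and $\x^* \in U_B$ by construction, so $U_B$ is nonempty and Remark~\ref{re:remark} applies to $\overline{U_B}$ as well.

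The inclusion $\overline{U_B} \subset X$ is immediate, because the equalities describing $\overline{U_B}$ contain those describing $X$. For the reverse inclusion $X \subset \overline{U_B}$, it suffices to show that for every $\y \in X$ and every $(i,j) \in B$ the equality $L_{i,j}(\y) = \min_{j'} L_{i,j'}(\y)$ holds. If $(i,j) \in B_1 \cup B_2$, this is part of the description of $X$. If $(i,j) \in B \setminus (B_1 \cup B_2)$, I pick some $(i,j_0) \in B_1 \cup B_2$ with the same first index $i$ (such an element exists because a nonempty $U_{B_s}$ forces $B_s$ to contain at least one element for each $i$, otherwise the strict-inequality clause in the definition of $U_{B_s}$ is unsatisfiable) and consider the affine function $g := L_{i,j} - L_{i,j_0}$. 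On $X$ we have $g \ge 0$, since $L_{i,j}(\x) \ge \min_{j'} L_{i,j'}(\x) = L_{i,j_0}(\x)$ there, while $g(\x^*) = 0$ as both indices lie in $B$. An affine function that is non-negative on a convex set and vanishes at a relative-interior point must be identically zero on the affine hull of that set; hence $g \equiv 0$ on $\mathcal{L}(X)$, which yields $L_{i,j}(\y) = L_{i,j_0}(\y) = \min_{j'} L_{i,j'}(\y)$ on all of $X$, as required.

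The main obstacle is precisely this last step: promoting the pointwise equality $L_{i,j}(\x^*) = L_{i,j_0}(\x^*)$ (forced by the definition of $B$) to an identity valid throughout $X$, which is where the relative-interior hypothesis on $\x^*$ enters in an essential way. The remainder of the argument is bookkeeping with Remark~\ref{re:remark} and the definitions of $U_B$ and $\overline{U_B}$.
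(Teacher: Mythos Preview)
Your proof is correct and follows essentially the same approach as the paper's. The only cosmetic difference is that the paper defines $B$ as the set of indices $(i,j)$ for which the equality $L_{i,j}=\min_{j'}L_{i,j'}$ holds on \emph{all} of $X$, and then argues that $U_B$ contains the relative interior of $X$; you instead define $B$ at a single relative-interior point $\x^*$ and then promote the equalities to all of $X$ via the affine-function argument---but these two definitions of $B$ coincide, and both proofs hinge on the same relative-interior observation together with Remark~\ref{re:remark}.
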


\begin{proof}
For any $(i,j_0) \in B_1 \cup B_2,\> 1 \le i \le k$ and any $\x \in \overline{U_{B_1}} \cap \overline{U_{B_2}}$
we have $\min_j \{ L_{i,j}(\x) \}=L_{i,j_0}(\x)$.

Define $B$ as the set of all $(i,j_1) \in D$ such that
$\min_j \{ L_{i,j}(\x) \}=L_{i,j_1}(\x)$ for every $\x \in \overline{U_{B_1}} \cap \overline{U_{B_2}}$.
Hence, $B \supset (B_1 \cup B_2)$.
It remains to prove that $\overline{U_B} = \overline{U_{B_1}} \cap \overline{U_{B_2}}$.

The inclusion $\overline{U_B} \subset (\overline{U_{B_1}} \cap \overline{U_{B_2}})$ follows from
Remark~\ref{re:remark}.
Conversely, since $\overline{U_{B_1}} \cap \overline{U_{B_2}}$ is a closed convex polyhedron,
for every $(i,j_2) \in D \setminus B$ the set of all $\x \in \overline{U_{B_1}} \cap \overline{U_{B_2}}$
such that $\min _j \{ L_{i,j}(\x)\}< L_{i,j_2}(\x)$ contains the interior of
$\overline{U_{B_1}} \cap \overline{U_{B_2}}$.
Hence, $U_B$ also contains this interior.
It follows that $(\overline{U_{B_1}} \cap \overline{U_{B_2}}) \subset \overline{U_B}$.
\end{proof}

Similar to \cite{DG}, consider an arrangement $\mathcal A$ in $\Real^n$ consisting of at most
$\ell:= k \binom{m}{2}$ hyperplanes of the form $L_{i,j_1}=L_{i,j_2}$ for all $1 \le i \le k,\>  1 \le j_1<j_2 \le m$.
Without loss of generality, assume that $n \le \ell$.

Observe that for every $B \subset D$ the set $U_B$ is a face of $\mathcal A$ whenever $U_B \neq \emptyset$.
Then Lemmas~\ref{le:interior} and \ref{le:subset} imply that the number of faces $\varphi (V)$ of $V$
does not exceed the number of faces $\varphi (\mathcal A)$ of $\mathcal A$.
According to \cite{Z}, $\varphi (\mathcal A) \le n2^n \binom{\ell}{n}$, thus
$\varphi (V) \le n2^n \binom{\ell}{n}$.
We proved the following theorem.

\begin{theorem}\label{th:main2}
The number of all faces of a tropical prevariety $V \subset \Real^n$ defined by $k$ $m$-sparse tropical polynomials
is at most
$$
n2^n \binom{k \binom{m}{2}}{n}.
$$
\end{theorem}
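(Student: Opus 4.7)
The plan is to reduce face-counting on $V$ to face-counting on an auxiliary hyperplane arrangement, and then to apply a standard combinatorial bound. Concretely, I would introduce the arrangement $\mathcal{A}$ in $\Real^n$ whose hyperplanes are the loci $\{L_{i,j_1}=L_{i,j_2}\}$ for $1\le i\le k$ and $1\le j_1<j_2\le m$, of which there are at most $\ell:=k\binom{m}{2}$, and show that every face of $V$ is (the closure of) an open face of $\mathcal{A}$.

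The first step is to observe that every nonempty $U_B$ is a relatively open face of $\mathcal{A}$. This is immediate from the defining conditions: for each $1\le i\le k$, on $U_B$ the equalities $L_{i,j_0}=L_{i,j_1}$ hold for $(i,j_0),(i,j_1)\in B$, and the strict inequalities $L_{i,j_0}<L_{i,j_2}$ hold for $(i,j_0)\in B$, $(i,j_2)\notin B$, which is exactly the description of a relatively open cell of the arrangement $\mathcal{A}$.

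Next, I would use Lemmas~\ref{le:interior} and \ref{le:subset} to identify the faces of $V$ with a subfamily of the faces of $\mathcal{A}$. Lemma~\ref{le:interior} says that the relative interior of every face of any $\overline{U_B}$ is again of the form $U_{B_1}$ with $B\subsetneq B_1$, while Lemma~\ref{le:subset} says that $\overline{U_{B_1}}\cap\overline{U_{B_2}}$ is itself a closure $\overline{U_B}$ for some $B$. Together these guarantee that the sets $U_B$ form the relative interiors of the cells of a polyhedral decomposition of $V$; since each such $U_B$ is also an open face of $\mathcal{A}$, we obtain $\varphi(V)\le\varphi(\mathcal{A})$.

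Finally, assuming without loss of generality that $n\le\ell$, I would invoke the bound from \cite{Z}, namely $\varphi(\mathcal{A})\le n2^n\binom{\ell}{n}$, and substitute $\ell=k\binom{m}{2}$ to conclude. The substantive content is the identification of faces of $V$ with faces of $\mathcal{A}$, which is already supplied by Lemmas~\ref{le:interior} and \ref{le:subset}; the closing step is a direct appeal to a classical arrangement bound, so there is no real obstacle remaining.
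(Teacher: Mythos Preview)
Your proposal is correct and follows essentially the same route as the paper: introduce the arrangement $\mathcal{A}$ of at most $\ell=k\binom{m}{2}$ hyperplanes $\{L_{i,j_1}=L_{i,j_2}\}$, observe that each nonempty $U_B$ is a face of $\mathcal{A}$, use Lemmas~\ref{le:interior} and~\ref{le:subset} to conclude $\varphi(V)\le\varphi(\mathcal{A})$, and then apply Zaslavsky's bound $\varphi(\mathcal{A})\le n2^n\binom{\ell}{n}$ under the harmless assumption $n\le\ell$. Your write-up is slightly more explicit than the paper's about why the two lemmas yield the polyhedral decomposition of $V$ by the $U_B$'s, but the argument is the same.
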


As in the proof of Corollary~\ref{cor:volume}, passing from $V$ to a homotopy equivalent bounded polyhedral complex $W$,
having no more cells than $V$ has, we obtain the following corollary.

\begin{corollary}\label{cor:sparse}
The sum of Betti numbers of $V$ satisfies the inequality
$$
{\rm b}(V) \le n2^n \binom{k \binom{m}{2}}{n}.
$$
\end{corollary}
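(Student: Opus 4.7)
The plan is to imitate the derivation of Corollary~\ref{cor:volume} from Theorem~\ref{th:main1}, only substituting the sparse face bound of Theorem~\ref{th:main2} for the volume bound. Three ingredients will suffice: a cell count for $V$, a reduction to a bounded complex on the same homotopy type and with no extra cells, and the Weak Morse Inequality that links the rank of homology to the cell count.

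First, I would appeal to Theorem~\ref{th:bounded} in the Appendix, which produces a bounded polyhedral complex $W$ homotopy equivalent to $V$ and satisfying $\varphi(W) \le \varphi(V)$. Homotopy equivalence preserves singular homology with real coefficients, so ${\rm b}(V)={\rm b}(W)$, and $W$ carries the structure of a finite CW complex whose cells are polyhedra. Next, I would apply Theorem~\ref{th:main2} directly to $V$ to obtain
$$
\varphi(V) \le n 2^n \binom{k\binom{m}{2}}{n},
$$
and therefore the same bound holds for $\varphi(W)$.

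Finally, I would invoke the Weak Morse Inequality \cite[Corollary~3.7]{Forman}, which guarantees ${\rm b}(W) \le \varphi(W)$ for any finite CW complex $W$. Chaining the three inequalities
$$
{\rm b}(V) = {\rm b}(W) \le \varphi(W) \le \varphi(V) \le n 2^n \binom{k\binom{m}{2}}{n}
$$
gives the claim. There is no real obstacle at this stage: the combinatorial work has been done in Theorem~\ref{th:main2} (via the arrangement argument and Lemmas~\ref{le:interior} and \ref{le:subset}), and the topological work has been done in Theorem~\ref{th:bounded}; the corollary is simply the book-keeping that assembles these pieces.
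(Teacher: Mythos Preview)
Your proposal is correct and follows exactly the paper's own argument: the paper simply states that ``as in the proof of Corollary~\ref{cor:volume}, passing from $V$ to a homotopy equivalent bounded polyhedral complex $W$, having no more cells than $V$ has,'' the corollary follows, and your expansion of this into Theorem~\ref{th:bounded}, Theorem~\ref{th:main2}, and the Weak Morse Inequality is precisely the intended reasoning.
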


In conclusion, we construct an example of a tropical prevariety with $k=n$ which shows that upper bounds in
Theorem~\ref{th:main2} and Corollary~\ref{cor:sparse} differ from a lower bound up to a factor $m^n$.

Let $f_i$, $1 \le i \le n$ be a tropical polynomial in one variable $X_i$, of degree $m$ with $m$
tropical zeroes.
Then the tropical prevariety defined by the system $f_1, \ldots ,f_n$ consists of exactly $m^n$
isolated points.

\section{Appendix:
homotopy equivalence of a finite polyhedral complex to a bounded polyhedral complex}

\begin{theorem}\label{th:bounded}
Let $K$ be a finite polyhedral complex in $\Real^n$ in which some cells may be unbounded.
Then $K$ is homotopy equivalent to a bounded polyhedral complex having no more cells than $K$ has.
\end{theorem}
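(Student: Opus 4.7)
The plan is to construct $W$ in two stages: first, truncate $K$ by intersecting with a large convex polytope $B$ to obtain a bounded polyhedral complex $K'$ that is homotopy equivalent to $K$; second, apply a sequence of elementary (Whitehead) collapses to $K'$ to reduce the cell count to at most the cell count of $K$.

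For the first stage, I choose a closed convex polytope $B \subset \Real^n$ whose interior contains every $0$-cell of $K$, placed in sufficiently general position that $\partial B$ meets the relative interior of each cell of $K$ transversally. Setting $K' := K \cap B$ gives a bounded polyhedral complex whose cells fall into two groups: \emph{old} cells $F \cap B$, one for each cell $F$ of $K$ (so in bijection with the cells of $K$), and \emph{new} cells lying in $\partial B$ that arise from the intersections of unbounded cells of $K$ with $\partial B$. A strong deformation retraction $K \to K'$ is then built cell by cell using convexity: for each unbounded cell $F$ of $K$, points of $F \setminus B$ are pushed along a straight line, in a direction inside the recession cone of $F$, into $F \cap B$. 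Global continuity is secured by choosing the sliding directions across all unbounded cells coherently along their common faces.

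For the second stage, the transversality assumption on $B$ ensures that each top-dimensional new cell (lying in $\partial B$) is a free face of a unique old cell of $K'$, so that an elementary collapse removes both cells of such a pair while preserving the homotopy type. Iterating, together with any further collapses needed among new cells themselves, one eliminates all the new cells and arrives at a bounded polyhedral complex $W$ homotopy equivalent to $K$. Since each collapse removes one new cell together with one other cell (old or new), all the new cells can be discarded while, at worst, losing equally many old cells, so that $|W| \le |K|$. The main obstacles will be making the two stages precise: constructing a globally continuous retraction by selecting sliding directions consistently on all unbounded cells at once, and verifying that the sequence of collapses can always be scheduled so that a free face persists at every stage until no new cell remains.
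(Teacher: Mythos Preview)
Your two-stage plan is reasonable in outline, but it diverges substantially from the paper's argument, and the two ``main obstacles'' you flag at the end are precisely the crux of the proof---not afterthoughts. The paper does \emph{not} attempt an explicit sliding retraction or a sequence of collapses. Instead it (i) first reduces to the case where no cell of $K$ contains a straight line, by showing (Lemmas~4.2--4.5) that each connected component factors as $L\times(C\cap T)$ for a maximal affine subspace $L$ and a transversal $T$; (ii) truncates by a large simplex $S_\rho$ and invokes Hardt's semi-algebraic local triviality to get the homotopy equivalence $K\simeq K\cap S_\rho$, bypassing any hand-built retraction; (iii) triangulates $K\cap S_\rho$ so that every simplex with all vertices on $\partial S_\rho$ actually lies in $\partial S_\rho$; and (iv) applies a standard induced-subcomplex deformation retraction (\cite[Lemma~4.7.27]{BV}) to retract onto the subcomplex $\widehat K_A$ spanned by the interior vertices, which is exactly the complex of bounded cells of $K$ and therefore has at most $|K|$ cells.

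Your Stage~1 is where skipping the line-free reduction bites. If some cell $F$ is a full affine subspace, then $F$ has no $0$-cell, and your ``sliding direction in the recession cone'' is ambiguous (both $v$ and $-v$ lie in the recession cone of a line); ensuring a coherent global choice across all unbounded cells sharing such a face is exactly the difficulty the paper avoids by invoking Hardt's theorem. Your Stage~2 claim that a top-dimensional new cell is always a free face of a unique old cell is correct at the first step, but after a collapse the incidence structure changes and it is not evident that a \emph{new} free face persists at every stage until all new cells are gone; absent a shelling-type argument, the schedule may stall with more than $|K|$ cells remaining. The paper sidesteps this entirely: rather than collapsing, it identifies in advance the target subcomplex (the bounded cells of the line-free $K$) and retracts to it in one stroke via the induced-subcomplex lemma. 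If you want to salvage your route, you will likely need either the preliminary line-free reduction or an independent proof that $K\cap\partial B$ is, in a suitable sense, shellable relative to the old cells.
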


\begin{lemma}\label{le:poly}
If a convex (closed) polyhedron $P$ in $\Real^n$ contains a straight line $\ell$, and $T$ is
an affine subspace $T$ in $\Real^n$, complementary orthogonal to $\ell$, then $P \simeq \ell \times (P \cap T)$,
where $\simeq$ denotes homeomorphism.
\end{lemma}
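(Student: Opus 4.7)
The plan is to observe that a convex closed polyhedron which contains an entire straight line is automatically invariant under translation along that line, and then to package this invariance as a Cartesian product via the orthogonal splitting of $\Real^n$ determined by $\ell$ and $T$. The crucial input is that $P$ is a \emph{polyhedron}, so only finitely many linear inequalities need to be controlled.

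First I would write $P = \bigcap_{i=1}^{s} \{\x \in \Real^n : a_i \cdot \x \le b_i\}$ as a finite intersection of closed half-spaces, and let $v$ be a unit direction vector of $\ell$. Since $\ell \subset P$, each half-space contains a translate of $\ell$, which forces $a_i \cdot v = 0$ for every $i$ (otherwise the inequality would fail for sufficiently large $|t|$ along the line). Consequently, for any $\x \in P$ and any $t \in \Real$, $a_i \cdot (\x + t v) = a_i \cdot \x \le b_i$, so $\x + t v \in P$; hence $P$ is invariant under every translation along the direction of $\ell$.

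Next I would exploit that $T$ is complementary orthogonal to $\ell$: its direction is $v^\perp$ and $T$ is not parallel to $\ell$, so $\ell \cap T$ consists of a single point $p^{*}$, and every $\x \in \Real^n$ admits a unique decomposition $\x = q + t v$ with $q \in T$ and $t \in \Real$. I would then define
\[
\phi \colon \ell \times (P \cap T) \to P, \qquad \phi(p, q) := q + (p - p^{*}).
\]
This map is affine, hence continuous; the translation invariance established above shows that $\phi$ lands in $P$ and is surjective; and uniqueness of the orthogonal decomposition gives injectivity together with a continuous inverse $\x \mapsto (p^{*} + t v,\, q)$, where $\x = q + t v$ is the unique decomposition with $q \in T$, $t \in \Real$. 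This delivers the required homeomorphism.

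There is no genuinely hard step: the entire content of the lemma is concentrated in the implication ``$\ell \subset P$ implies $P$ is $\ell$-invariant'', which is a standard consequence of the polyhedral (finitely generated) structure of $P$. The only bookkeeping care required is to fix the basepoint $p^{*} = \ell \cap T$ so that the product decomposition is canonical and the affine (as opposed to linear) nature of $\ell$ and $T$ is handled correctly; nonemptiness of $P \cap T$ comes for free since $p^{*} \in \ell \subset P$.
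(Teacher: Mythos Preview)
Your proof is correct and follows the same approach as the paper: establish that $P$ is invariant under translations parallel to $\ell$, then read off the product decomposition from the orthogonal splitting $\Real^n \simeq \ell \times T$. The paper compresses all of this into the single observation that for any $\x \in P$ the line through $\x$ parallel to $\ell$ lies in $P$; your half-space argument makes this explicit, though in fact closed convexity alone (without the polyhedral hypothesis you call ``crucial'') already yields the invariance by a limiting convex-combination argument.
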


\begin{proof}
Observe that for any $\x \in P$, in particular, for any $\x \in P \cap T$, the straight line
$m$ through $\x$, parallel to $\ell$, is also contained in $P$.
\end{proof}

\begin{lemma}\label{le:one}
Let $K$ be a finite polyhedral complex in $\Real^n$, with a cell containing a straight line $\ell$,
and $T$ be an affine subspace $T$ in $\Real^n$, complementary orthogonal to $\ell$.
Then the connected component $C$ of $K$, containing $\ell$ is homeomorphic to $\ell \times (C \cap T)$, where
$C \cap T$ is a $(\dim (K) -1)$-dimensional connected polyhedral complex consisting of all cells of the kind
$P \cap T$ for all cells $P$ in $C$.
\end{lemma}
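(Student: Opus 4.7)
The plan is to show that every cell of $C$ contains a line parallel to $\ell$, then to apply Lemma~\ref{le:poly} cell by cell, and finally to glue the products together. By hypothesis, there is a cell $P_0 \in C$ with $\ell \subset P_0$. The key inductive step is this: if a cell $P$ of $K$ contains a line parallel to $\ell$ and $P'$ is a cell of $K$ sharing a nonempty common face $F$ with $P$, then $P'$ also contains a line parallel to $\ell$. Letting $v$ be the direction vector of $\ell$, one checks that $v$ is parallel to every supporting hyperplane of $P$, for otherwise translating along $v$ would push some point of $P$ out of the half-space bounded by that hyperplane, contradicting that $v$ lies in the lineality space of $P$. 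Consequently $v$ lies in the lineality space of every face of $P$, including $F$; so $F$ contains a line parallel to $\ell$, and hence so does $P' \supset F$. Since $C$ is the connected component of the underlying space containing $P_0$, any cell of $C$ is linked to $P_0$ by a chain of cells meeting in common faces, and iterating the inductive step shows that every cell of $C$ contains a line parallel to $\ell$.

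Next, translate coordinates so that $\ell \cap T = \{ \0 \}$, identifying $\Real^n$ with the orthogonal product $\ell \times T$ via the decomposition $\x = \x_\ell + \x_T$. For each $P \in C$, since $v$ lies in the lineality space of $P$, one has $P = (P \cap T) + \ell$, or equivalently $P \simeq \ell \times (P \cap T)$ under the identification; this is the content of Lemma~\ref{le:poly}. Taking the union over $P \in C$ we obtain
$$C \;=\; \bigcup_{P \in C} P \;=\; \bigcup_{P \in C} \ell \times (P \cap T) \;=\; \ell \times (C \cap T),$$
which is the desired homeomorphism.

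Finally, one verifies that $C \cap T$ is a connected polyhedral complex of dimension $\dim K -1$. Its cells are the polyhedra $P \cap T$ for $P \in C$; the intersection axiom holds because $(P_1 \cap T) \cap (P_2 \cap T) = (P_1 \cap P_2) \cap T$ and the common face $P_1 \cap P_2$ itself belongs to $C$; each cell satisfies $\dim (P \cap T) = \dim P - 1$ because $P = \ell \times (P \cap T)$; and connectedness of $C \cap T$ follows from that of $C \simeq \ell \times (C \cap T)$ by projection onto the second factor. The main obstacle is the first paragraph—propagating the lineality direction from the initial cell $P_0$ to every other cell of $C$—which rests on the structural fact that the lineality space of a polyhedron is contained in the lineality space of each of its faces, combined with the polyhedral-complex axiom that adjacent cells meet in common faces.
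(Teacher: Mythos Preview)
Your proof is correct and follows essentially the same strategy as the paper's: propagate the lineality direction $v$ from the initial cell to every cell of $C$ via chains of adjacent cells sharing a common face, then invoke Lemma~\ref{le:poly} cell by cell and take the union. The paper's proof is considerably terser---it records only the inductive step for a single pair of adjacent cells $P,Q$ with common face $R$ and leaves the iteration and the gluing implicit---whereas you spell out the chain argument, the identification $P=(P\cap T)+\ell$, and the verification that $C\cap T$ is again a polyhedral complex of the right dimension; these additions are welcome but do not constitute a different method.
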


\begin{proof}
Let $P$ and $Q$ be the cells in $K$, having a common face $R$, and let $P$ contain a straight line $\ell$.
By Lemma~\ref{le:poly}, $P \simeq \ell \times (P \cap T)$.
Since $R \subset Q$, cells $Q$ and $R$ contain straight lines $\ell_Q$ and $\ell_R$, respectively,
parallel to $\ell$.
Hence $Q \simeq \ell \times (Q \cap T)$ and $R \simeq \ell \times (R \cap T)$.
It follows that $P$ and $Q$ are in the same connected component $C$ of $K$ and $P \cup Q \simeq \ell \times (P' \cup Q')$.
\end{proof}

\begin{lemma}\label{le:many}
Let $K$ be a finite polyhedral complex in $\Real^n$, with a cell $D$ containing an affine space $L$ with $\dim (L)=d \ge 0$
and not containing any affine space of dimension greater than $d$.
Let $T$ be an affine space complementary orthogonal to $L$.
Then the connected component $C$ of $K$, containing $D$, is homeomorphic to $L \times (C \cap T)$, where
$C \cap T$ is a $(\dim (K) -d)$-dimensional connected polyhedral complex, not containing any straight line, and
consisting of all cells of the kind $P \cap T$ for all cells $P$ in $C$.
\end{lemma}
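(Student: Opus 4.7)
The plan is to prove the statement by induction on $d = \dim L$, peeling off one line from $L$ at a time using Lemma~\ref{le:one}. The base case $d = 0$ is essentially trivial: $L$ is a point, $T = \Real^n$, and the asserted homeomorphism reduces to $C \simeq C$. The requirement that $C \cap T = C$ contain no straight line (in any cell) follows from the hypothesis on $D$ via the face-propagation argument in the proof of Lemma~\ref{le:one}: if some cell $Q$ of $C$ contained a line, then along any chain of cells of $C$ connecting $Q$ to $D$ through common faces the contained-line property would transfer to adjacent cells, and $D$ would contain a line, contradicting $d = 0$.

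For the inductive step ($d \ge 1$), choose any line $\ell \subset L$, let $T_1$ be an affine subspace of $\Real^n$ complementary orthogonal to $\ell$, and apply Lemma~\ref{le:one} to get $C \simeq \ell \times (C \cap T_1)$, with $C \cap T_1$ a connected $(\dim K - 1)$-dimensional polyhedral complex in $T_1$ whose cells are the slices $P \cap T_1$ for $P \in C$. The cell $D \cap T_1$ contains $L' := L \cap T_1$, an affine space of dimension $d - 1$. The key verification is that $D \cap T_1$ contains no affine space of dimension greater than $d - 1$: otherwise, if $D \cap T_1 \supset L''$ with $\dim L'' \ge d$, then $L'' \subset T_1 \perp \ell$, and since $D$ is invariant under translation along $\ell$ (the standard argument underlying Lemma~\ref{le:poly}), we would obtain $D \supset L'' + \ell$, an affine space of dimension at least $d + 1$, contradicting the hypothesis on $D$.

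Apply the inductive hypothesis to the complex $C \cap T_1$, the cell $D \cap T_1$, and the $(d-1)$-dimensional maximal affine space $L'$. Let $T_2$ be complementary orthogonal to $L'$ inside $T_1$; since $T_2 \perp L'$ and $T_2 \subset T_1 \perp \ell$, while $L$ is spanned in direction by $\ell$ and $L'$, we see $T_2 \perp L$, and $\dim T_2 = (n-1) - (d-1) = n - d = \dim T$, so $T_2$ may be identified with $T$. The induction yields $C \cap T_1 \simeq L' \times (C \cap T)$, where $C \cap T$ is connected, of dimension $\dim(K) - d$, contains no straight line, and has cells $P \cap T$ for $P \in C$. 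Combining, $C \simeq \ell \times L' \times (C \cap T) \simeq L \times (C \cap T)$, since $\ell$ and $L'$ together span $L$. The main obstacle is the maximality transfer in the inductive step, resolved above through translation-invariance of $D$ along $\ell$; the identification of $T_2$ with $T$ via their common orthogonality to $L$ is a routine check of directions.
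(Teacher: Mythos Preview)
Your proof is correct and follows exactly the paper's inductive strategy: peel off one line $\ell \subset L$ via Lemma~\ref{le:one}, verify that $D \cap T_1$ contains $L \cap T_1$ maximally, and invoke the inductive hypothesis. You supply more detail than the paper does---notably the face-propagation argument for the ``no line'' claim in the base case and the translation-invariance justification for the maximality transfer---but the approach is the same.
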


\begin{proof}
Proof by induction on $d$, with the base for $d=0$ being obvious.
If $d>0$ then $D$ contains a straight line, which we denote by $\ell$.
Let $T_\ell$ be an affine space complementary orthogonal to $\ell$.
By Lemma~\ref{le:one}, $C \simeq \ell \times (C \cap T_\ell)$, while $\dim (C \cap T_\ell)= \dim (C)-1$.
The cell $D':= D \cap T_\ell$ contains the affine space $L \cap T_\ell$, $\dim (L \cap T_\ell)=d-1$,
and does not contain any affine space of dimension greater than $d-1$.
To conclude the proof, apply the inductive hypothesis to $C \cap T_\ell$.
\end{proof}

\begin{lemma}\label{le:bounded}
Any finite polyhedral complex $K$ in $\Real^n$, is homotopy equivalent to a complex $K'$ in $\Real^n$
in which no cell contains a straight line.
Complexes $K$ and $K'$ have equal number of cells.
\end{lemma}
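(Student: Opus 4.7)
The plan is to handle each connected component of $K$ separately and apply Lemma~\ref{le:many} to strip off the maximal affine factor, which is contractible and thus does not affect the homotopy type while preserving the cell count.

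Decompose $K$ into its finitely many connected components $C_1, \ldots, C_s$. For a fixed component $C$, let $d = d(C)$ be the maximum dimension of a lineality space of a cell of $C$; this maximum is attained by some cell $D$ of $C$ because $C$ has only finitely many cells. If $d = 0$, no cell of $C$ contains a straight line and we set $C' := C$. Otherwise $D$ contains an affine space $L$ of dimension $d$, and by the choice of $d$ no cell of $C$ contains an affine space of greater dimension, so the hypothesis of Lemma~\ref{le:many} is met. Taking $T \subset \Real^n$ complementary orthogonal to $L$, Lemma~\ref{le:many} gives a homeomorphism $C \simeq L \times (C \cap T)$, where $C \cap T$ is a polyhedral complex whose cells $P \cap T$ correspond bijectively to the cells $P$ of $C$ and which contains no straight line. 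Since $L \cong \Real^d$ is contractible, the projection $L \times (C \cap T) \to C \cap T$ is a homotopy equivalence, so $C$ is homotopy equivalent to $C' := C \cap T$.

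Define $K'$ to be the union of the $C_i'$. Then $K'$ has the same total number of cells as $K$ (by the bijection on cells within each component), no cell of $K'$ contains a straight line, and $K$ is homotopy equivalent to $K'$ since homotopy equivalences of pairwise disjoint components combine to give one of their disjoint unions. The only remaining bookkeeping is to ensure that the $C_i'$ sit pairwise disjointly inside $\Real^n$, so that $K'$ is genuinely a polyhedral complex in $\Real^n$: since each $C_i'$ contains no straight line, this can be arranged by translating the $C_i'$ far apart along a suitably generic direction, without altering the cell structure or the homotopy type. This embedding step is the main, and essentially only, technical subtlety; the rest of the argument is a direct application of Lemma~\ref{le:many} combined with the contractibility of an affine space.
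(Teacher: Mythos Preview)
Your proof is correct and follows essentially the same approach as the paper's: apply Lemma~\ref{le:many} component by component and pass to the cross-section $C\cap T$, which the paper phrases as a deformation retract of $C$ rather than by invoking contractibility of $L$. One small simplification: your final translation step is unnecessary, since $C_i' = C_i \cap T_{C_i} \subset C_i$ and the connected components $C_i$ of $K$ are already pairwise disjoint closed subsets of $\Real^n$, so the $C_i'$ are automatically disjoint and assemble into a polyhedral complex without any repositioning.
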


\begin{proof}
Consider in turn each connected component $C$ of $K$.
Let $C$ contain an affine space $L_C$ with $\dim (L_C) \ge 0$ and not contain
any affine space of dimension greater than $\dim (L_C)$.
By Lemma~\ref{le:many}, $C \simeq L_C \times (C \cap T_C)$, where $T_C$ is an affine space complementary
orthogonal to $L_C$.
Then $C':=C \cap T_C$ is a deformation retract of $C$, not containing a straight line.
Moreover, each cell in $C'$ is of the form $D \cap T_C$, where $D$ is a cell in $C$, hence complexes
$C$ and $C'$ have equal number of cells.
\end{proof}

\begin{proof}[Proof of Theorem~\ref{th:bounded}]
In view of Lemma~\ref{le:bounded}, it is sufficient to prove the theorem in the case when no cell in $K$
contains a straight line.
In particular, every connected component of $K$ is {\em pointed} (i.e., contains a vertex of $K$).

Semi-algebraic local triviality \cite{Hardt} implies that for any sufficiently large positive $\rho \in \Real$
the intersection $\widehat K:= K \cap S_\rho$, and the intersection of $K$ with the interior of $S_\rho$,
for the simplex
$$
S_\rho= \{ (x_1, \ldots ,x_n) \in \Real^n|\> x_j \ge -\rho,\ 1 \le j \le n,\ x_1+ \cdots +x_n \le \rho \},
$$
are both homotopy equivalent to $K$.
Impose on $\widehat K$ a structure of a (bounded) polyhedral complex by adding to all bounded cells in $K$
(they are contained in the interior of $S_\rho$) the new polyhedral cells that are intersections
of unbounded cells in $K$ with faces of $S_\rho$.

Consider an unbounded cell $P$ in $K$.
There is a (closed) facet $R$ of $S_\rho$ such that $P \cap R = \emptyset$.
Suppose that there are exactly $k$ such facets for $P$, remove them from $\partial S_\rho$ and denote the
difference by $(\partial S_\rho)_P$.
Note that $(\partial S_\rho)_P$ has a unique face $L_P$ of the minimal dimension $k-1$ (the intersection of
all facets of $(\partial S_\rho)_P$).
Choose a point ${\bf a}_P \in L_P$ for each cell $P$.

Following \cite[Theorem~8.2]{P}, perform a triangulation of $\widehat K$ by induction on $\dim (\widehat K)$.
In the base case of 1-skeleton of $\widehat K$, its cells are already simplices.
In the case of $m$-skeleton, with $m>1$, and an unbounded cell $P$, consider simplices having ${\bf a}_P$ as one
vertex and other vertices being all vertices of one of the simplices in $(m-1)$-skeleton,
constructed by the inductive hypothesis.
In the case of a bounded $m$-cell $P$, perform the same construction, choosing an arbitrary vertex in $P$,
instead of ${\bf a}_P$.
According to \cite[Theorem~8.2]{P}, we obtain a triangulation of $\widehat K$
which we will continue to denote by $\widehat K$.

We now prove by induction that all simplices in $\widehat K$ with vertices in $\partial S_\rho$ are
contained in $\partial S_\rho$.
Every $m$-simplex in an $m$-cell $P$ in $\widehat K$ with vertices in $\partial S_\rho$ is the union of closed
intervals of the kind $[{\bf a}_P, {\bf b}]$, where each ${\bf b}$ belongs to the same facet of $(\partial S_\rho)_P$,
by the inductive hypothesis.
Since $L_P$ is the intersection of all facets of $(\partial S_\rho)_P$, points ${\bf a}_P$ and ${\bf b}$ belong
to the same facet, hence $[{\bf a}_P, {\bf b}]$ lies in this facet, and so does the whole simplex.

Partition the set of all vertices $V$ of $\widehat K$ into two classes, $V=A \cup B$, where
$A$ are vertices in $K$ while $B$ are vertices belonging to $\partial S_\rho$.
Let $\widehat K_A$ be the induced subcomplex on vertices in $A$, i.e., the complex of bounded cells in $K$,
and $\widehat K_B$ be the induced subcomplex on vertices in $B$.
Note that $\widehat K_A$ is a triangulation of a bounded polyhedral complex having no more cells than $K$ has,
while $\widehat K_B$ is a triangulation of $\widehat K \cap \partial S_\rho$.
Then, by \cite[Lemma~4.7.27]{BV}, $\widehat K_A$ is a deformation retract of
$\widehat K \setminus \widehat K_B$.
It follows that $\widehat K_A$ is homotopy equivalent to $K$, and we can take $\widehat K_A$ as the bounded
polyhedral complex required in the theorem.
\end{proof}

\subsection*{Acknowledgements}
Authors thank D. Feichtner-Kozlov, G.M. Ziegler, and the anonymous referee for useful comments.
Part of this research was carried out during authors' joint visit in September 2017 to the Hausdorff
Research Institute for Mathematics at Bonn University, under the program Applied and Computational Algebraic
Topology, to which they are very grateful.
D. Grigoriev was partly supported by the RSF grant 16-11-10075.

\end{document}